\documentclass[a4paper]{article}

\usepackage[utf8]{inputenc}
\usepackage{lmodern}
\usepackage{array}
\usepackage{graphicx}
\usepackage{stmaryrd}
\usepackage{subfig}
\usepackage{amssymb, amsfonts, amsthm, amsmath}
\usepackage{enumerate}
\usepackage{enumitem}
\usepackage{bbm}
\usepackage[english]{babel}
\usepackage{tikz}
\usetikzlibrary{decorations}
\usetikzlibrary{decorations.pathreplacing}
\tikzset{individu/.style={draw,thick}}

\numberwithin{equation}{section}

\theoremstyle{plain}
\newtheorem{theorem}{Theorem}[section]
\newtheorem{corollary}[theorem]{Corollary}

\newtheorem{lemma}[theorem]{Lemma}
\newtheorem{proposition}[theorem]{Proposition}

\theoremstyle{definition}

\theoremstyle{remark}

\newcommand{\Z}{\mathbb{Z}}
\newcommand{\R}{\mathbb{R}}

\newcommand{\cc}{\mathbb{C}}
\newcommand{\cp}{\mathbb{CP}}
\newcommand{\Ga}{\Gamma}

\newcommand{\calC}{\mathcal{C}}

\newcommand{\calS}{\mathcal{S}}

\newcommand{\calH}{\mathcal{H}}

\newcommand{\calT}{\mathcal{T}}
\newcommand{\calG}{\mathcal{G}}

\newcommand{\calA}{\mathcal{A}}
\newcommand{\calD}{\mathcal{D}}

\def\wt#1{\widetilde #1}

\def\cc{\mathbb C}
\def\oc{\overline\cc}
\def\cp{\mathbb{CP}}

\def\mcd{\mathcal D}

\renewcommand{\epsilon}{\varepsilon}
\renewcommand{\phi}{\varphi}

\title{A first integrability result for Miquel dynamics}

\author{Alexey Glutsyuk\thanks{ CNRS, France (UMR 5669 (UMPA, ENS de Lyon) and
Interdisciplinary Scientific Center J.-V.Poncelet), 
Lyon, France. 
E-mail:
aglutsyu@ens-lyon.fr}
\thanks{National Research University Higher School of Economics (HSE),
Moscow, Russia}
\thanks{Supported by part by RFBR grants 16-01-00748, 16-01-00766} \and Sanjay Ramassamy\thanks{Unit\'e de Math\'ematiques Pures et Appliqu\'ees, \'Ecole normale sup\'erieure de Lyon, 46 all\'ee d'Italie, 69364 Lyon Cedex 07, France. 
E-mail:
sanjay.ramassamy@ens-lyon.fr}
\thanks{Supported by the Fondation Simone et Cino Del Duca}
}

\date{\today}

\begin{document}

\maketitle

\begin{abstract}
Miquel dynamics is a discrete-time dynamical system on the space of square-grid circle patterns. For biperiodic circle patterns with both periods equal to two, we show that the dynamics corresponds to translation on an elliptic curve, thus providing the first integrability result for this dynamics. The main tool is a geometric interpretation of the addition law on the normalization of binodal quartic curves.
\end{abstract}

\section{Introduction}

Miquel dynamics was introduced by the second author in~\cite{R17}, following an original idea of Richard Kenyon~\cite{K14}, as a discrete-time dynamical system on the space of square-grid circle patterns. It was then conjectured that for biperiodic circle patterns, Miquel dynamics belongs to the class of discrete integrable systems, which contains among others the dimer model~\cite{GK} and the pentagram map~\cite{OST1,OST2,S13}. In this article, we show that in the particular case when both periods are equal to two, Miquel dynamics corresponds, in the right coordinates, to translation on an elliptic curve. This is the first integrability result established for Miquel dynamics. An important observation we make to prove this is a simple geometric interpretation of the addition law on the normalization of algebraic curves of degree four with two nodes.

\subsection{Circle patterns and Miquel dynamics}

A \emph{square grid circle pattern} (abbreviated as SGCP) is a collection of points $(S_{i,j})_{(i,j)\in\Z^2}$ in the plane $\R^2$ such that for any $(i,j)\in\Z^2$, the points $S_{i,j},S_{i+1,j}$, $S_{i,j+1} $ and $S_{i+1,j+1}$ are pairwise distinct and concyclic, with the circle going through them denoted by $C_{i,j}$. The circles are colored in a checkerboard pattern: the circles $C_{i,j}$ with $i+j$ even (resp. odd) are colored black (resp. white). The center of the circle $C_{i,j}$ is denoted by $O_{i,j}$. We define two maps $\mu_w$ and $\mu_b$, respectively called \emph{white mutation} and \emph{black mutation}, from the set of SGCPs to itself. For any SGCP $S$, the SGCP $T:=\mu_w(S)$ is obtained as follows: for any $(i,j)\in\Z^2$ such that $i+j$ is even (resp. odd), $T_{i,j}$ is obtained by reflecting $S_{i,j}$ through the line $(O_{i,j}O_{i-1,j-1})$. (resp. $(O_{i-1,j}O_{i,j-1})$). It follows from Miquel's six-circles theorem~\cite{M38} that $T$ is indeed a circle pattern, with the same black circles as $S$ but with potentially different white circles. Similarly, for any SGCP $S$, the SGCP $T':=\mu_b(S)$ is obtained as follows: for any $(i,j)\in\Z^2$ such that $i+j$ is even (resp. odd), $T'_{i,j}$ is obtained by reflecting $S_{i,j}$ through the line $(O_{i-1,j}O_{i,j-1})$. (resp. $(O_{i,j}O_{i-1,j-1})$). Each mutation is an involution. Miquel dynamics is defined as the discrete-time dynamical system obtained by applying alternately $\mu_w$ followed by $\mu_b$. Note that this dynamics is different from the one on circle configurations studied by Bazhanov, Mangazeev and Sergeev~\cite{BMS}, which uses a different version of Miquel's theorem.

Given two positive even integers $m$ and $n$ and two non-collinear vectors $\vec{u}$ and $\vec{v}$ in $\R^2$, an SGCP $S$ is said to be $(m,n)$-\emph{biperiodic} with monodromies $\vec{u}$ and $\vec{v}$ if for any $(i,j)\in\Z^2$, the following two conditions hold:
\begin{enumerate}
 \item $S_{i+m,j}=S_{i,j}+\vec{u}$ ;
 \item $S_{i,j+n}=S_{i,j}+\vec{v}$.
\end{enumerate}

We denote by $\calS_{m,n}$ the set of all $(m,n)$-biperiodic SGCPs (with arbitrary monodromies). This set is stable under both black mutation and white mutation. Miquel dynamics on $\calS_{m,n}$ is conjectured to be integrable in some sense. In this paper we provide a first integrability result in the case when $m=n=2$. For the remainder of the paper, all SGCPs will be in $\calS_{2,2}$.

Let $S\in\calS_{2,2}$ be an SGCP. We will denote its vertices in the fundamental domain $\left\{0,1,2\right\}^2$ as follows (see Figure~\ref{fig:pattern} for an illustration):
\[
\begin{array}{ccc}
A=S_{0,0} & B=S_{1,0} & C=S_{2,0} \\
D=S_{0,1} & E=S_{1,1} & F=S_{2,1} \\
G=S_{0,2} & H=S_{1,2} & I=S_{2,2}
\end{array}
\]

\begin{figure}[htbp]
\centering
\includegraphics[height=2.5in]{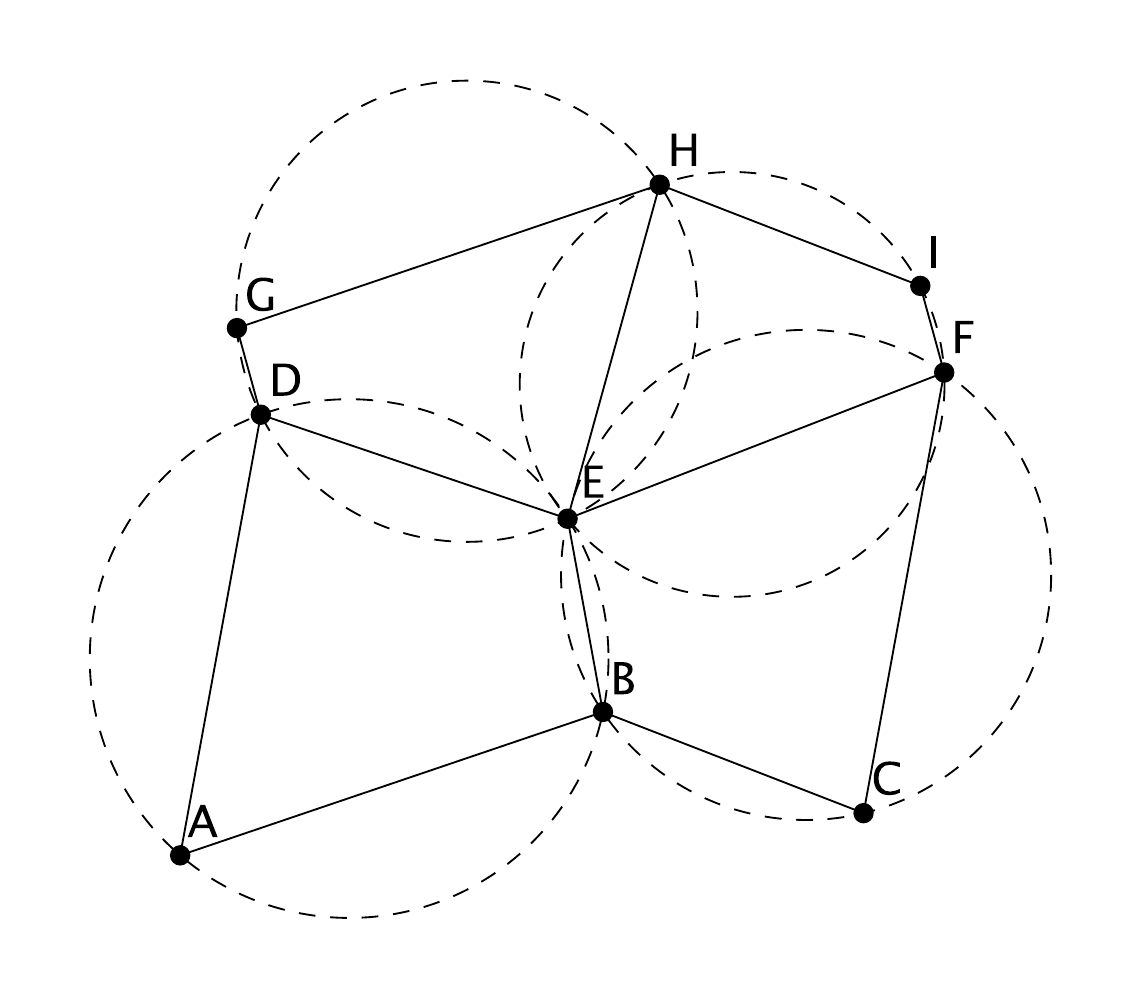}
\caption{Illustration of the notation for a biperiodic circle pattern with both periods equal to two.}
\label{fig:pattern}
\end{figure}

Set $S_w:=\mu_w(S)$ and $S_b:=\mu_b(S)$. We will denote their vertices in the fundamental domain $\left\{0,1,2\right\}^2$ respectively by $A_w,\ldots,I_w$ and $A_b,\ldots,I_b$. Instead of looking at the absolute motion of the points, we consider the relative motion of points with respect to one another. To do so, we introduce the pattern $S_w'$ (resp. $S_b'$) which is obtained from $S_w$ (resp. $S_b$) by applying the translation of vector $\overrightarrow{A_wA}$ (resp. $\overrightarrow{A_bA}$). We call \emph{renormalized white mutation} $\mu_w'$ (resp. \emph{renormalized black mutation} $\mu_b'$) the map which to $S$ associates $S_w'$ (resp. $S_b'$). We denote the vertices of $S_w'$ and $S_b'$  in the fundamental domain $\left\{0,1,2\right\}^2$ respectively by $A_w',\ldots,I_w'$ and $A_b',\ldots,I_b'$. It was shown in~\cite{R17} that both points $E_w'$ and $E_b'$ lie on some explicit quartic curve $Q_S$, which also contains the points $A,C,E,G$ and $I$ (see Section~\ref{sec:reminder} for a precise definition of $Q_S$). In other words, the relative motion of the point in position $(1,1)$ with respect to the point in position $(0,0)$ lies on this curve $Q_S$. The curve $Q_S$ has, in an appropriate coordinate system, an equation of the form
\begin{equation}
\label{eq:Miquelquartic}
(x^2+y^2)^2+ax^2+by^2+c=0,
\end{equation}
with $(a,b,c)\in\mathbb{R}^3$. See Figure~\ref{fig:quartic} for an example. We call a \emph{Miquel quartic} a quartic curve which has an equation of the form~\eqref{eq:Miquelquartic}. As a special case of Miquel quartics, when $a+b=0$, we obtain the family of Cassini ovals~\cite{Y47}.

\begin{figure}[htbp]
\centering
\includegraphics[height=2in]{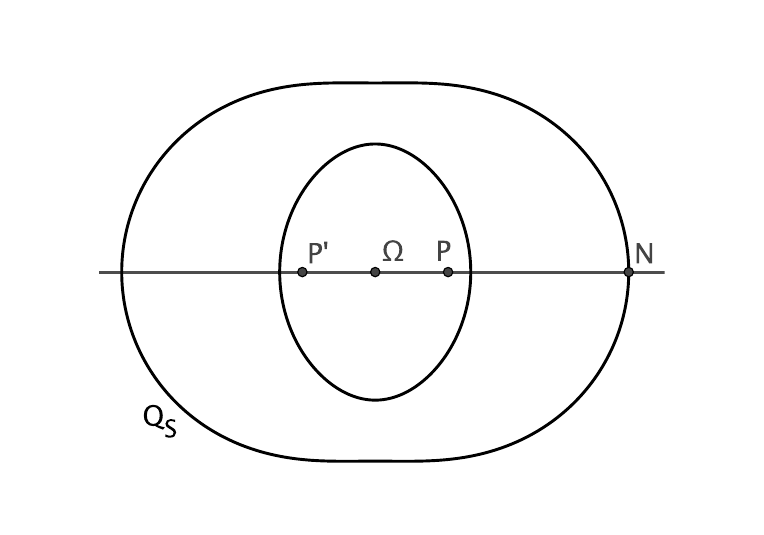}
\caption{Example of a quartic curve $Q_S$, with the center $\Omega$, the foci $P$ and $P'$ and the neutral element for addition $N$ all lying on the horizontal coordinate axis. The curve $Q_S$ may have either one or two ovals.}
\label{fig:quartic}
\end{figure}

\subsection{Addition on binodal quartic curves}

A complex quartic curve in $\cp^2$ is called \emph{binodal} if it has two nodes, i.e. singularities at which two regular local branches intersect transversally. A binodal quartic curve is called \emph{non-degenerate} if it has no other singularities. The projective closure in $\cp^2$ of a Miquel quartic is generically a non-degenerate binodal quartic curve, with its two nodes being the \emph{circular points at infinity} (also called isotropic points) with homogeneous coordinates $(1:\pm i:0)$, which lie on the infinity line $\oc_{\infty}=\cp^2\setminus\cc^2$ (see also Lemma~\ref{lem:quarticdegeneration} for a more precise statement). Every non-degenerate binodal quartic has an elliptic normalization, that is a holomorphic parametrization by an elliptic curve that is bijective outside the nodes. Thus, its normalization has a natural group structure, which is unique once the neutral element is chosen. Everywhere below, whenever we write about the addition law on a non-degenerate binodal quartic, we mean the addition law on its elliptic normalization. By an abuse of notation, the points on a non-degenerate binodal quartic and their lifts to the elliptic normalization will be denoted by the same symbols. 

We will show that, for Miquel dynamics, the motion of the point in position $(1,1)$ induced by the composition of a black and a white renormalized mutation corresponds to an explicit translation on the normalization of $Q_S$. Another geometric interpretation of translation on certain binodal quartic curves, in terms of foldings of quadrilaterals, was introduced by Darboux~\cite{D79} and studied more recently in~\cite{BH04,I15}. In order to state our translation result, we first give a geometric interpretation of the addition on non-degenerate binodal quartic curves.

The elliptic curve group law has a well-known geometric interpretation in the case of smooth cubic curves. In recent years, other geometric interpretations have arisen in the case of quartic curves, such as Edwards curves, later generalized to twisted Edwards curves~\cite{E07,BBJLP,H16}, and Jacobi quartic curves~\cite{W11}. For a non-degenerate binodal quartic curve, we obtain the addition law by fixing an arbitrary base point and declaring that, whenever a conic passes through both nodes and the base point, the other three intersection points of the conic with the quartic have zero sum. More specifically, we have the following theorem. 

\begin{theorem}
\label{thm:quarticaddition}
Let $\Ga\subset\cp^2$ be a non-degenerate binodal quartic curve with nodes $T_1$ and $T_2$, and let $P\in\Ga\setminus\{ T_1,T_2\}$ be a base point. Consider the two-dimensional family $\calC_P$ of conics through the three points $T_1$, $T_2$, $P$. Any conic $c\in \calC_P$ intersects $\Ga$ at three additional points $X_c,Y_c$ and $Z_c$, which need not be distinct and may coincide with $T_1$, $T_2$ or $P$. 
Then one can choose a neutral element $N_P$ for the addition law on the normalization $\hat\Ga$ of $\Ga$ such that $X_c+Y_c+Z_c=0$ for every $c\in\calC_P$.  
\end{theorem}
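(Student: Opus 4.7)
My plan is to translate the geometric statement into linear equivalence of divisors on the elliptic curve $\hat\Ga$ and then to choose the origin so that the relevant degree-$3$ sums vanish.

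First I would describe the divisor cut out on $\hat\Ga$ by a conic $c \in \calC_P$. B\'ezout gives total degree $2\cdot 4 = 8$. Since a smooth curve passing through a node $T_i$ of $\Ga$ meets both local branches there, its pullback to $\hat\Ga$ contains $T_i^+ + T_i^-$, where $T_i^\pm$ are the two preimages of $T_i$; it also contains $P$. Hence the pullback divisor always contains the fixed effective subdivisor
$$F := T_1^+ + T_1^- + T_2^+ + T_2^- + P$$
of degree $5$, and the residual part $R_c := c \cdot \Ga\vert_{\hat\Ga} - F$ is effective of degree $3$. Writing $R_c = X_c + Y_c + Z_c$ recovers the three additional intersection points, with the coincidences allowed in the statement interpreted as multiplicities.

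Next I would exploit the fact that $\calC_P$ is a linear system of conics in $\cp^2$: all divisors $c \cdot \Ga$ are linearly equivalent, hence so are their pullbacks to $\hat\Ga$, and subtracting the common fixed part $F$ shows that all $R_c$ lie in a single linear equivalence class $[R] \in \mathrm{Pic}^3(\hat\Ga)$. On the elliptic curve $\hat\Ga$, a degree-$3$ divisor $D$ satisfies $D \sim 3 N$ for some point $N$ if and only if, once $N$ is taken as the neutral element, the three points in $D$ sum to zero. Thus the theorem reduces to finding $N_P \in \hat\Ga$ with $[R] = 3 [N_P]$ in $\mathrm{Pic}^3(\hat\Ga)$; since $\hat\Ga \cong \cc/\Lambda$ as complex tori, multiplication by $3$ is surjective on $\hat\Ga$, so such an $N_P$ exists (in fact nine of them, differing by the $3$-torsion).

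The most delicate step is the first one: the multiplicity bookkeeping at the nodes for non-generic $c$ (reducible conics, or conics tangent to a branch of $\Ga$ at a node). For these the individual multiplicities in the pullback can shift between $T_i^+$ and $T_i^-$, but the additivity of local intersection numbers across the two branches of a node ensures that $F$ remains a subdivisor in all cases, so that $R_c$ is always an honest effective divisor of degree $3$. This is the key point needed to extend the argument uniformly from the generic member of $\calC_P$ to the whole family.
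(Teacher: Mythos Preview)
Your argument is correct and follows the same overall strategy as the paper: show that the residual degree-$3$ divisors $R_c$ all lie in a single linear equivalence class on $\hat\Ga$, then use surjectivity of multiplication by $3$ on the elliptic curve to choose an origin $N_P$ making each $X_c+Y_c+Z_c$ vanish. The only difference is in how the first step is packaged. The paper outsources the nodal multiplicity bookkeeping to the classical theory of adjoint curves (the Brill--Noether residue theorem), which asserts directly that conics through both nodes cut out a complete linear system of degree-$4$ divisors on $\hat\Ga$; it then subtracts the base point $[P]$. You instead pull back the full degree-$8$ intersection divisor and peel off the explicit fixed part $F=T_1^++T_1^-+T_2^++T_2^-+P$ by hand, checking that each $T_i^\pm$ really appears. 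Your route is more elementary and self-contained; the paper's is shorter but invokes a black box. Both arrive at the same place.
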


Theorem~\ref{thm:quarticaddition} easily follows from the classical theory of adjoint curves (see for example~\cite[section 49]{V45}). However, it does not seem to be explicitly stated in the literature. This result generalizes the geometric interpretation of addition for twisted Edwards curves in terms of intersections with hyperbolas~\cite{H16}.

We use the above theorem to construct the group law on Miquel quartics. We denote by $\calS_{2,2}^0\subset\calS_{2,2}$ the subset of patterns $S$ such that the binodal quartic $Q_S$ is non-degenerate. We will see in Lemma~\ref{lem:quarticdegeneration} that a Miquel quartic with an equation of the form~\eqref{eq:Miquelquartic} is non-degenerate if and only if $4c\notin \left\{0,a^2,b^2\right\}$ and $a \neq b$, so that $\calS_{2,2}^0$ is Zariski-open in $\calS_{2,2}$.

\begin{proposition}
\label{prop:miquelgrouplaw}
Let $S\in\calS_{2,2}^0$. We pick $N$ to be an intersection point of $Q_S$ with the $x$-axis. One can consider the group law on $Q_S$ as constructed in Theorem~\ref{thm:quarticaddition} with $N$ being both the base point and the neutral element. The inverse is given by reflection through the $x$-axis. The sum of two points $P_1$ and $P_2$ is given by taking the circle going through $P_1,P_2$ and $N$ and reflecting through the $x$-axis the fourth point of intersection $P_3$ of this circle with $Q_S$.
\end{proposition}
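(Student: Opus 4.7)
The plan hinges on two observations. First, every real circle in $\R^2$ is cut out by a conic in $\cp^2$ passing through the two circular points at infinity $T_1=(1:i:0)$ and $T_2=(1:-i:0)$, which are precisely the nodes of $Q_S$; hence every circle through $N$ belongs to $\calC_N$. Second, the reflection $\sigma:(x,y)\mapsto(x,-y)$ preserves equation~\eqref{eq:Miquelquartic}, extends to $\cp^2$ swapping $T_1$ and $T_2$, and therefore lifts to a holomorphic involution of $\hat Q_S$.

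I first check that $N$ may indeed be chosen as the neutral element in the group law of Theorem~\ref{thm:quarticaddition} with base point $N$. Fix an auxiliary group structure on $\hat Q_S$ with some neutral $O_0$. Since $\calC_N$ is a two-dimensional linear system, the divisor $X_c+Y_c+Z_c$ on $\hat Q_S$ is linearly equivalent to a fixed divisor as $c$ varies, so its image in the group is a fixed element $K\in\hat Q_S$ independent of $c\in\calC_N$; the valid neutrals provided by Theorem~\ref{thm:quarticaddition} are then exactly the points $N'$ with $3N'=K$. I compute $K$ using the degenerate conic $c^\ast:=\{x\text{-axis}\}\cup\oc_\infty$, which lies in $\calC_N$ since its components contain $\{T_1,T_2\}$ and $N$ respectively. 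Its intersections with $Q_S$ consist of $T_1,T_2$ (with multiplicity 2 at each node) together with the four $x$-axis points $N,N_1,N_2,N_3$, so the three additional intersections are $N_1,N_2,N_3$ and $K=N_1+N_2+N_3$.

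The non-degeneracy condition $4c\neq a^2$ ensures that $N,N_1,N_2,N_3$ are four distinct fixed points of $\sigma$, so $\sigma$ cannot be a translation; in the $O_0$-group it must then take the form $P\mapsto -P+\tau$ for some $\tau\in\hat Q_S$, with fixed-point equation $2P=\tau$. Summing the four fixed points gives $N+N_1+N_2+N_3=2\tau$, while specializing to $N$ gives $2N=\tau$. Substituting, $K=2\tau-N=4N-N=3N$, so $N$ indeed satisfies $3N=K$ and qualifies as a valid neutral.

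With $N$ as neutral, $\sigma$ rewrites as $\sigma(P)=-P+\tau'$ where $\tau'=\sigma(N)=N$; since $N$ is now the zero of the group, $\sigma(P)=-P$, establishing the inverse formula. For the sum formula, given $P_1,P_2\in Q_S$ in general position, the circle through $N,P_1,P_2$ belongs to $\calC_N$ by the first observation, and its four affine intersections with $Q_S$ are $N,P_1,P_2,P_3$; Theorem~\ref{thm:quarticaddition} then yields $P_1+P_2+P_3=0$, hence $P_1+P_2=-P_3=\sigma(P_3)$, as claimed. The main obstacle is the neutral-element verification: Theorem~\ref{thm:quarticaddition} only guarantees the existence of some valid neutral, and matching it with the specific base point $N$ requires the fixed-point analysis of the involution $\sigma$.
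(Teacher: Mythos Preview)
Your argument is correct, and it reaches the same conclusion as the paper, but by a genuinely different route at the key step of showing that $N$ is a valid neutral element. The paper does this in one line: by the $\sigma$-symmetry of $Q_S$, the osculating circle at $N$ is itself $\sigma$-symmetric, hence has contact of order at least four with $Q_S$ at $N$; this conic lies in $\calC_N$ with $X_c=Y_c=Z_c=N$, so the common value $K$ equals $3N$ directly. You instead pick the degenerate conic $\{x\text{-axis}\}\cup\oc_\infty\in\calC_N$, read off $K=N_1+N_2+N_3$, and then run a fixed-point count for the lifted involution $\hat\sigma$ on $\hat Q_S$ to deduce $K=3N$. Your approach is a bit longer but makes the group-theoretic mechanism completely explicit (classifying involutions on an elliptic curve, summing the four $2P=\tau$ solutions), and it avoids any local analysis of contact order. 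The paper's approach is quicker and more geometric, exploiting the symmetry of the osculating circle to produce a conic whose residual divisor is visibly $3[N]$. One minor point: when you assert that $N,N_1,N_2,N_3$ are four \emph{distinct} points you invoke $4c\neq a^2$; you are also implicitly using $c\neq0$ (so that neither root $u$ of $u^2+au+c=0$ vanishes), which is part of the non-degeneracy hypothesis as well.
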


\begin{theorem}
\label{thm:translation}
Using the group law on $Q_S$ defined in Proposition~\ref{prop:miquelgrouplaw} and the notation for $A,C,E,E_w'$ and $E_b'$ defined above, we have for any $S\in\calS_{2,2}^0$
\begin{align}
E_w'&=-E-2A \label{eq:whitemutation} \\
E_b'&=-E-2C \label{eq:blackmutation}.
\end{align}
In particular, the composition of a renormalized white mutation followed by a renormalized black mutation produces a translation by $2(A-C)$.
\end{theorem}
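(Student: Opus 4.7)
The plan is to prove~\eqref{eq:whitemutation}, obtain~\eqref{eq:blackmutation} by the same argument with the black circles $C_{0,0}, C_{1,1}$ replaced by the white circles $C_{0,1}, C_{1,0}$ and the corner $A$ replaced by $C$ (which lies on $C_{1,0}$), and deduce the translation statement by composition. For the composition, the four corner points $A, C, G, I$ are all fixed by $\mu_w'$: the reflection line defining $\mu_w$ at each corner is a translate of the line at $A = S_{0,0}$ by the appropriate period ($\vec{u}$, $\vec{v}$, or $\vec{u}+\vec{v}$), so all four corners are displaced by the same vector $A_w - A$, which is precisely what the renormalization undoes. In particular $C_w' = C$, and applying~\eqref{eq:blackmutation} to $S_w'$ gives
\[
(\mu_b' \circ \mu_w')(E) = -E_w' - 2C = E + 2A - 2C = E + 2(A-C),
\]
the claimed translation.

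To prove~\eqref{eq:whitemutation}, I use a direct consequence of Theorem~\ref{thm:quarticaddition}: \emph{any circle meets $Q_S$ in an affine divisor of degree four whose points sum to zero in the group law on $Q_S$}. (For a circle through $N$ this follows from Proposition~\ref{prop:miquelgrouplaw} applied to the three non-$N$ affine intersections; for an arbitrary circle, the sum is the same element of $Q_S$ by linear equivalence of the divisors cut by two conics through the nodes $T_1, T_2$.) Conversely, every affine divisor of degree four summing to zero arises from such a circle. Applied to the divisor $E + E_w' + 2A$ (the multiplicity two at $A$ reflecting tangency at $A$), this reduces~\eqref{eq:whitemutation} to the geometric statement: \emph{the unique circle through $A, E, E_w'$ is tangent to $Q_S$ at $A$}.

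To establish this tangency, I compute the tangent direction to $Q_S$ at $A$ from the canonical equation~\eqref{eq:Miquelquartic} (after locating $A$ in the distinguished coordinate system in which $Q_S$ takes this form), and match it against the tangent at $A$ of the circle through $A, E, E_w'$, using the Miquel-geometric description $E_w = (C_{0,0} \cap C_{1,1}) \setminus \{E\}$, $A_w = (C_{0,0} \cap C_{-1,-1}) \setminus \{A\}$ (with $C_{-1,-1}$ the translate of $C_{1,1}$ by $-\vec{u}-\vec{v}$), and $E_w' = E_w + (A - A_w)$.

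The main obstacle is carrying out the tangency verification cleanly: a direct coordinate computation is feasible but unwieldy, so the conceptually preferred route is to characterize the tangent to $Q_S$ at $A$ purely in terms of the Miquel circles meeting at $A$, reducing the claim to a concyclicity identity of Miquel-theorem type.
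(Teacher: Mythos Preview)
Your overall strategy matches the paper's exactly: reduce~\eqref{eq:whitemutation} via Theorem~\ref{thm:quarticaddition} and Lemma~\ref{lem:completesystem} to the geometric claim that the circle through $A,E,E_w'$ is tangent to $Q_S$ at $A$, argue~\eqref{eq:blackmutation} by symmetry, and compose. Your divisor argument (any circle cuts $Q_S$ in a degree-four divisor summing to zero) and your treatment of the corners under renormalized mutation are both correct.

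The gap is that you do not actually prove the tangency claim. You describe two routes --- a direct coordinate computation and a hoped-for ``concyclicity identity of Miquel-theorem type'' --- but carry out neither, and you flag this yourself as ``the main obstacle.'' This is precisely the nontrivial content of the theorem: the group-law reduction is soft, but the tangency is a genuine fact about how the pattern geometry interacts with the quartic $Q_S$, and it does not follow from anything you have written. In particular, your second route (characterizing the tangent to $Q_S$ at $A$ via the Miquel circles) is not developed enough to be assessed; no such synthetic argument is known to the authors.

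The paper fills this gap via Proposition~\ref{prop:tangentcircles}, whose proof is a computer-algebra verification. The key idea that makes the computation tractable is the parametrization from Proposition~\ref{prop:hyperbola}: normalize the equilateral hyperbola through $B,D,E,F,H$ to $xy=1$ (generic case) or $xy=0$ (trapezoidal case), express every point --- including $A,C,G,I$, the foci $P,P'$ of $Q_S$, and $E_w'$ --- as explicit rational functions of the five abscissas $b,d,e,f,h$, and then check the tangency symbolically. Without this parametrization the coordinates of $A$ and of the quartic are implicit, which is why a naive attack in the $(a,b,c)$-coordinates of~\eqref{eq:Miquelquartic} is unwieldy.
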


It follows from~\cite{R17} that the space $\calS_{2,2}$ is of real dimension $9$. It was also shown there that, after application of the composition $\mu_w' \circ \mu_b'$ of renormalized black and white mutations, the point $A$, the vectors $\overrightarrow{AC}$ and $\overrightarrow{AG}$ as well as the angles $\angle CBA$ and $\angle ADG$ do not change (see also Subsection~\ref{subsec:conservedquantities}). These provide eight real conserved quantities. Once we fix the values of these eight conserved quantities, we obtain a one-parameter family of possible patterns, parametrized by the position of $E$ on a quartic curve of Miquel type. Theorem~\ref{thm:translation} shows that Miquel dynamics induces a translation on that quartic curve. This is a sign of integrability of Miquel dynamics for $(2,2)$-biperiodic circle patterns, with the quartic curve playing, in a sense, the role of a Liouville torus. It also reinforces the conjecture about the integrability of Miquel dynamics for general $(m,n)$-biperiodic circle patterns.

We conclude this introduction by deriving, as a consequence of Theorem~\ref{thm:translation}, a measure on Miquel quartics which is invariant under Miquel dynamics.

\begin{corollary}
\label{cor:invariantmeasure}
Let $Q$ be a non-degenerate Miquel quartic with an equation of the form~\eqref{eq:Miquelquartic}. The 1-form 
\begin{equation}
\label{eq:form}
\omega=\frac{d(x^2+y^2)}{xy}
\end{equation}
is invariant under any translation on $Q$. In particular, for any $S\in\calS_{2,2}^0$, the composition $\mu_b' \circ \mu_w'$ induces a map for the motion of $E$ on $Q_S$ which leaves invariant the form $\omega$ on $Q_S$. Furthermore, for such a map on $Q_S$, the modulus $|\omega|$ is an invariant measure.
\end{corollary}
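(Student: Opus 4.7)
The plan is to recognize $\omega$ as a nonzero scalar multiple of the (essentially unique) translation-invariant holomorphic 1-form on the elliptic normalization $\hat Q$ of $Q$, after which Theorem~\ref{thm:translation} finishes the job. The main technical content is showing that $\omega$ is holomorphic on $\hat Q$; once that is done, the rest is formal.

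First, I introduce the rational functions $u = x^2+y^2$ and $v = xy$ on $Q$. Substituting $x^2+y^2 = u$ into the defining equation $(x^2+y^2)^2+ax^2+by^2+c = 0$ gives $u^2+au+(b-a)y^2+c = 0$, whence $y^2 = -(u^2+au+c)/(b-a)$ and $x^2 = (u^2+bu+c)/(b-a)$, so that
\begin{equation*}
v^2 = x^2 y^2 = -\frac{(u^2+au+c)(u^2+bu+c)}{(b-a)^2}.
\end{equation*}
The non-degeneracy conditions $a \neq b$ and $4c \notin \{0,a^2,b^2\}$ from Lemma~\ref{lem:quarticdegeneration} are exactly what is needed for the right-hand side to have four pairwise distinct roots in $u$. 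Hence the affine curve $\Gamma' \subset \cc^2$ defined by this equation is smooth of genus one, and $\omega = du/v$ is (up to scalar) its standard holomorphic differential.

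Next I verify that $\omega$ extends to a holomorphic 1-form on $\hat Q$. On the open set where $v \neq 0$ and $u$ is finite, this is immediate. At zeros of $v$ (where $x = 0$ or $y = 0$ on $Q$), differentiating the relation above yields $2v\,dv$ as a regular multiple of $du$ whose coefficient is nonzero at each simple root of the quartic, so $du/v$ is regular there. At the two nodes of $Q$ (the circular points at infinity), I pass to the chart $u' = 1/u$, $v' = v/u^2$: direct substitution gives $v'(0)^2 = -1/(b-a)^2$, so $v' \to \pm i/(b-a) \neq 0$, and $\omega = du/v = -du'/v'$ is regular at $u' = 0$. Therefore $\omega$ is a nowhere-vanishing holomorphic 1-form on $\hat Q$.

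Since $\hat Q$ is a complex elliptic curve, its one-dimensional space of holomorphic 1-forms consists entirely of translation-invariant forms, so $\omega$ is preserved by every translation of $\hat Q$; this is the first assertion of the corollary. By Theorem~\ref{thm:translation} the composition $\mu_b' \circ \mu_w'$ acts on the position of $E$ in $Q_S$ as the translation by $2(A-C)$, hence it preserves $\omega$ and therefore also $|\omega|$. Restricted to the real locus of $Q_S$, $|\omega|$ is a positive Borel measure, yielding the claimed invariant measure. The only nontrivial step is the holomorphy check at the two circular nodes at infinity, which is made painless by the $(u,v)$-chart, where those nodes become the two smooth points of $\Gamma'$ above $u = \infty$.
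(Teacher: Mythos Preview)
Your proof is correct and takes a genuinely different route from the paper's. The paper argues directly on $Q$: it checks by hand that $\omega$ has no poles at the intersections of $Q$ with the coordinate axes (using that the local parametrization $y=g(x)$ is even, so $dy=O(x)\,dx$), and then performs a careful local-coordinate computation at the circular points to show that the pole of $xy$ and the order of vanishing of $d(x^2+y^2)$ along each nodal branch cancel. You instead pass through the substitution $(u,v)=(x^2+y^2,\,xy)$, which exhibits $\omega$ as the pullback of the standard differential $du/v$ on the hyperelliptic curve $v^2=-\dfrac{(u^2+au+c)(u^2+bu+c)}{(b-a)^2}$; you then observe that the non-degeneracy conditions of Lemma~\ref{lem:quarticdegeneration} are precisely the conditions for the right-hand quartic in $u$ to have four distinct roots, after which holomorphy of $du/v$ is the textbook fact for genus-one hyperelliptic curves. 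Your approach is more conceptual and explains \emph{why} this particular $\omega$ is the invariant form, whereas the paper's computation stays entirely on $Q$ and avoids introducing an auxiliary curve.

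One small point worth making explicit: the map $(x,y)\mapsto(u,v)$ is $2$-to-$1$ on $Q$ (it identifies $(x,y)$ with $(-x,-y)$), so $\hat Q\to\hat\Gamma'$ is a degree-two isogeny rather than an isomorphism, and the four local branches of $\hat Q$ over the two nodes map in pairs to the two smooth points of $\hat\Gamma'$ above $u=\infty$. Your phrasing ``those nodes become the two smooth points of $\Gamma'$'' slightly obscures this, but it does not affect the argument: the pullback of a holomorphic $1$-form along a nonconstant holomorphic map of compact Riemann surfaces is holomorphic, so regularity of $-du'/v'$ at $u'=0$ on $\hat\Gamma'$ already forces regularity of $\omega$ at all four preimages on $\hat Q$.
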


\subsection*{Outline of the paper}
Section~\ref{sec:reminder} consists in recalling several results needed from~\cite{R17}: the connection between patterns in $\calS_{2,2}$ and five-pointed equilateral hyperbolas, the integrals of motion for Miquel dynamics on $\calS_{2,2}$ and the construction of the quartic curve $Q_S$. In Section~\ref{sec:tangentcircles}, we describe a simple geometric construction of $E_w'$ starting from the curve $Q_S$ and two points $A$ and $E$ on it. Finally in Section~\ref{sec:grouplaw}, we prove Theorem~\ref{thm:quarticaddition} about the group law on non-degenerate binodal quartics and, combining it with the geometric construction of the previous section, we prove Theorem~\ref{thm:translation} and Corollary~\ref{cor:invariantmeasure} about Miquel dynamics.

\section{The space $\calS_{2,2}$ and the curve $Q_S$}
\label{sec:reminder}

In this section we describe the dichotomy for patterns in $\calS_{2,2}$, between generic patterns and trapezoidal patterns. We provide the integrals of motion for renormalized mutations, which include a quartic curve. We explain how to construct this quartic curve in both the generic and trapezoidal cases. All the statements in this section were proved in~\cite{R17}.

\subsection{Generic and trapezoidal patterns}
\label{subsec:generictrap}

Let $S\in\calS_{2,2}$. Denote its vertices by $A,\ldots,I$ as on Figure~\ref{fig:pattern}. The pattern $S$ has four cyclic faces, $ABED$, $BCFE$, $DEHG$ and $EFIH$. Then either all the faces of $S$ are trapezoids (trapezoidal case) or no face of $S$ is a trapezoid (generic case). The trapezoidal case is subdivided into two subcases :
\begin{itemize}
 \item each triple of points $\left\{A,B,C\right\}$, $\left\{D,E,F\right\}$ and $\left\{G,H,I\right\}$ is aligned (horizontal trapezoidal case) ;
 \item each triple of points $\left\{A,D,G\right\}$, $\left\{B,E,H\right\}$ and $\left\{C,F,I\right\}$ is aligned (vertical trapezoidal case).
\end{itemize}
We denote respectively by $\calG$, $\calT_h$ and $\calT_v$ the classes of generic, horizontal trapezoidal and vertical trapezoidal patterns. It was shown that each of these three classes is stable under Miquel dynamics.

Patterns in $\calS_{2,2}$ enjoy a special property, formulated in terms of an equilateral hyperbola. Recall that an equilateral hyperbola is a hyperbola with orthogonal asymptotes. Degenerate cases of equilateral hyperbolas correspond to the union of two orthogonal lines.

\begin{proposition}[\cite{R17}]
\label{prop:hyperbola}
Fix $S\in\calS_{2,2}$. There exists an equilateral hyperbola $\calH$ going through the points $B,D,E,F$, and $H$. Furthermore, $\calH$ is non-degenerate if and only if $S\in\calG$. The pattern $S$ is in $\calT_h$ (resp. $\calT_v$) if and only if the points $D,E$ and $F$ (resp. $B,E$ and $H$) lie on one line of $\calH$ and the points $B$ and $H$ (resp. $D$ and $F$) lie on the other line of $\calH$.
\end{proposition}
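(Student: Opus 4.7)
The plan is to work in complex coordinates. Identifying $\R^2$ with $\C$ via $z = x + iy$ and translating so that $A = 0$, the $(2,2)$-biperiodicity reads $C = u$, $G = v$, $I = u + v$, $F = D + u$, $H = B + v$ for complex parameters $u,v$. The key observation is that a real conic is either an equilateral hyperbola or a (possibly degenerate) pair of perpendicular lines exactly when its equation has vanishing $|z|^2$ coefficient, and hence can be written as $\mathrm{Re}(\alpha z^2 + \mu z) + \nu = 0$ with $\alpha,\mu \in \C$ and $\nu \in \R$. Existence of $\calH$ is therefore equivalent to the solvability of the $5$-equation real linear system in the $5$ real parameters $(\alpha,\mu,\nu)$ obtained by requiring this expression to vanish at each of $B, D, E, F, H$.

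I would first simplify by differencing equations related by a translation. Subtracting the equation at $D$ from that at $F = D+u$ yields the single real constraint $\mathrm{Re}\bigl((2\alpha D + \alpha u + \mu)\,u\bigr) = 0$, involving only $(\alpha,\mu)$; symmetrically, $B$ and $H = B+v$ subtract to $\mathrm{Re}\bigl((2\alpha B + \alpha v + \mu)\,v\bigr) = 0$. Together with the original equations at $D$, $B$ and $E$ this is a $5\times 5$ real homogeneous system, whose solvability is equivalent to the vanishing of one explicit determinant $\Delta = \Delta(B,D,E,u,v)$. The main obstacle, where the real content lies, is to show $\Delta \equiv 0$ on the variety $\calS_{2,2}$. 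I would do so by writing the four cyclicity hypotheses as the reality of the cross ratios $[A,B;E,D]$, $[B,C;F,E]$, $[D,E;H,G]$, $[E,F;I,H]$, substituting the biperiodic relations, and combining these four real identities — exploiting the same translational symmetries that drove the differencing — to derive $\Delta = 0$ and, at the same time, a closed form for $(\alpha,\mu,\nu)$ as a rational expression in $B, D, E, u, v$.

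For the degeneracy statements I would observe that $\calH$ splits into two real lines iff the quadratic $\mathrm{Re}(\alpha z^2+\mu z)+\nu$ factors over $\R$, and then the vanishing $|z|^2$ coefficient automatically forces those two lines to be perpendicular. Since $D,F$ lie on a line of direction $u$ and $B,H$ on a line of direction $v$, the only possible distributions of the five points onto two perpendicular lines are $\{D,E,F\}\sqcup\{B,H\}$ (forcing $u\perp v$) or $\{B,E,H\}\sqcup\{D,F\}$ (also forcing $u\perp v$), matching respectively the horizontal and vertical trapezoidal configurations. Conversely, in each trapezoidal class the fact that every cyclic face is an isoceles trapezoid forces the stated alignment of three points on one line and of the other two on the perpendicular line through the appropriate mid-level, so $\calH$ degenerates as claimed (and incidentally confirms $u \perp v$). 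The non-degeneracy of $\calH$ on $\calG$ and the if-and-only-if characterizations of $\calT_h$ and $\calT_v$ then follow by contrapositive.
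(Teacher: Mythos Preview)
The paper does not prove this proposition: it is quoted from~\cite{R17}, and the surrounding text explicitly says ``All the statements in this section were proved in~\cite{R17}.'' So there is no in-paper argument to compare against, and I can only assess your proposal on its own terms.

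Your complex-coordinate setup is sound, and the characterization of equilateral hyperbolas (and their perpendicular-line degenerations) as real conics with vanishing $|z|^2$ coefficient is correct and efficient. However, the proposal is a plan rather than a proof. The decisive step---showing that the $5\times5$ determinant $\Delta(B,D,E,u,v)$ vanishes identically on $\calS_{2,2}$---is only promised (``combining these four real identities \ldots to derive $\Delta=0$''), not carried out. Since five generic points do \emph{not} lie on an equilateral hyperbola, this vanishing is exactly where the four concyclicity constraints must be used, and until you actually perform that elimination the argument is incomplete.

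The degeneracy analysis also has a gap. You assert that ``the only possible distributions of the five points onto two perpendicular lines are $\{D,E,F\}\sqcup\{B,H\}$ or $\{B,E,H\}\sqcup\{D,F\}$'' because $D,F$ differ by $u$ and $B,H$ differ by $v$. But this only constrains the direction of a line \emph{if} both $D$ and $F$ (respectively $B$ and $H$) land on that same line; you have not excluded, for instance, $D$ on one line and $F$ on the other, or a configuration like $\{B,D,E\}\sqcup\{F,H\}$. Ruling these out requires invoking the cyclicity of the four faces (or the distinctness hypotheses on each face), and that step is missing. The converse direction---that trapezoidal patterns force the stated alignments---is sketched plausibly but would also benefit from an explicit check that, say, in $\calT_h$ the points $B$ and $H$ really do lie on a common line perpendicular to $(DEF)$.
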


It was actually shown in~\cite{R17} that the space $\calS_{2,2}$ is parametrized by five-pointed equilateral hyperbolas: pick an equilateral hyperbola $\calH$ (four degrees of freedom) then pick five distinct points $B,D,E,F,H$ on $\calH$, there is a unique way to reconstruct a pattern $S\in\calS_{2,2}$ from this data, inverting the construction of Proposition~\ref{prop:hyperbola}.

The vertical trapezoidal case is handled in a similar fashion as the horizontal trapezoidal case. Thus from now on, among the trapezoidal patterns we shall only consider the horizontal trapezoidal ones and refer to the horizontal trapezoidal case simply as the trapezoidal case, omitting ``horizontal''.

\subsection{Conserved quantities under renormalized mutations}
\label{subsec:conservedquantities}

We recall the results about the invariants under Miquel dynamics found in~\cite{R17}. Fix $S\in\calS_{2,2}$ and write $S_w'=\mu_w'(S)$ and $S_b'=\mu_b'(S)$. Denote by $A'_w,\ldots,I'_w$ (resp. $A'_b,\ldots,I'_b$) the vertices of $S'_w$ (resp. $S'_b$). The following statements hold :
\begin{itemize}
 \item $(A,C,G,I)=(A_w',C_w',G_w',I_w')=(A_b',C_b',G_b',I_b')$ ;
  \item $\angle CBA = - \angle C_w'B_w'A_w' = - \angle C_b'B_b'A_b'$ ;
  \item $\angle ADG = - \angle A_w'D_w'G_w' = - \angle A_b'D_b'G_b'$.
\end{itemize}
 
Furthermore, there exists a quartic curve $Q_S$, the construction of which is explained in the next two subsections, verifying the following properties.

\begin{proposition}[\cite{R17}]
\label{prop:quarticstability}
For any $S\in\calS_{2,2}$, we have $Q_S=Q_{S_w'}=Q_{S_b'}$. Furthermore, the points $A$, $C$, $G$, $I$, $E$, $E'_w$ and $E_b'$ lie on $Q_S$.
\end{proposition}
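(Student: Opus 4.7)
The plan is to combine the conserved quantities from Subsection~\ref{subsec:conservedquantities} with the (forthcoming) construction of $Q_S$, and to argue that the coefficients of the normal form~\eqref{eq:Miquelquartic} depend only on data that is preserved by the renormalized mutations. First I would fix a coordinate system adapted to~\eqref{eq:Miquelquartic}: the symmetries $(x,y)\mapsto(-x,y)$ and $(x,y)\mapsto(x,-y)$ of this normal form must match intrinsic symmetries of the pattern, and the natural candidates are the perpendicular bisectors of $AC$ and $AG$, whose intersection is the center $\Omega$ visible on Figure~\ref{fig:quartic}. By Subsection~\ref{subsec:conservedquantities}, this frame is fixed under both $\mu_w'$ and $\mu_b'$, since $A,C,G,I$ are all preserved.

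Next I would express the coefficients $a,b,c$ of~\eqref{eq:Miquelquartic} as functions of $A,C,G,I$ together with the two angles $\angle CBA$ and $\angle ADG$. The key observation is that these angles are \emph{negated} by $\mu_w'$ and $\mu_b'$, whereas~\eqref{eq:Miquelquartic} contains only even powers of $x$ and $y$; so it suffices to show that the dependence of $(a,b,c)$ on the two angles factors through even quantities (squares, or cosines). Proposition~\ref{prop:hyperbola}, which parametrizes $\calS_{2,2}$ by five-pointed equilateral hyperbolas, should make this dependence explicit: one extracts $Q_S$ as the unique quartic of the prescribed shape passing through $A,C,E,G,I$, and the angles enter only through the position of $E$ and the shape of $\calH$, both of which one checks are symmetric under the joint sign flip of the two angles. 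This gives $Q_S=Q_{S_w'}=Q_{S_b'}$.

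The second half of the proposition, namely $E_w',E_b'\in Q_S$, is then for free. Indeed, by construction the vertex in position $(1,1)$ of any pattern in $\calS_{2,2}$ lies on its own quartic; applying this to $S_w'$ and $S_b'$ and using $Q_{S_w'}=Q_{S_b'}=Q_S$ shows $E_w',E_b'\in Q_S$.

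The main obstacle is the middle step: verifying that the coefficients $(a,b,c)$ are genuinely invariant under the simultaneous negation of $\angle CBA$ and $\angle ADG$. This either requires the explicit construction of $Q_S$ from the remainder of Section~\ref{sec:reminder} (not yet presented in the excerpt), or an \emph{a priori} argument based on the incidence conditions on $A,C,E,G,I$ and the normal form~\eqref{eq:Miquelquartic}: if one could show that the five points together with the shape~\eqref{eq:Miquelquartic} determine the quartic uniquely, then $Q_S$ would be determined by data which, modulo the symmetry of the normal form, is already preserved by the mutations, and the result would follow with no further computation.
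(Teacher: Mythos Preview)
First, note that the paper does not actually prove this proposition: it is quoted verbatim from~\cite{R17}, and the surrounding text in Section~\ref{sec:reminder} just recalls the construction of $Q_S$. So there is no ``paper's proof'' to compare against beyond the explicit formulas in Subsections~\ref{subsec:genericquartic}--\ref{subsec:trapquartic}. That said, your sketch has two genuine problems.

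\textbf{The coordinate frame is wrong in the generic case.} You take the symmetry axes of~\eqref{eq:Miquelquartic} to be the perpendicular bisectors of $AC$ and $AG$. But in the generic case $ACIG$ is only a parallelogram, and the perpendicular bisector of $AC$ passes through $\Omega$ only when $AG\perp AC$. The actual axes, as Subsection~\ref{subsec:genericquartic} makes explicit, are the line $(\Omega P)$ and its perpendicular through $\Omega$, where $P$ is built from the circumcenters $O_B,O_D$. Since $P$ itself depends on the two angles, the frame is not obviously preserved when the angles are negated; you would have to show that $P$ (or at least the line $\Omega P$) is invariant under the joint reflection of $O_B$ through $(AC)$ and $O_D$ through $(AG)$, which is exactly the hard part you flag as the ``main obstacle''.

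\textbf{The alternative argument is circular.} You propose showing that the five incidences $A,C,E,G,I\in Q_S$ together with the shape~\eqref{eq:Miquelquartic} pin down $Q_S$ uniquely. Even granting uniqueness, this does not help: among $A,C,E,G,I$ only $A,C,G,I$ are conserved by renormalized mutation, while $E$ moves to $E_w'$. So $Q_{S_w'}$ would be the Miquel quartic through $A,C,E_w',G,I$, and concluding $Q_{S_w'}=Q_S$ from uniqueness requires already knowing $E_w'\in Q_S$, which is precisely the second assertion you are trying to prove. Your final paragraph implicitly assumes the five points are all conserved, which they are not.

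Your last deduction (once $Q_S=Q_{S_w'}$ is known, $E_w'\in Q_{S_w'}=Q_S$ is immediate) is fine. But to close the argument you really do need the explicit construction of $Q_S$ and a direct check---most naturally via the five-pointed hyperbola parametrization, in the spirit of the computation carried out later in the proof of Proposition~\ref{prop:tangentcircles}---rather than the symmetry heuristic.
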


\subsection{Construction of $Q_S$ in the generic case}
\label{subsec:genericquartic}

Assume $S\in\calG$. In this case it was shown in~\cite{R17} that the angles $\angle CBA$ and $\angle ADG$ are not flat. Denote by $O_B$ (resp. $O_D$) the center of the circle through $A,B$ and $C$ (resp. $A,D$ and $G$). Let $P$ be the intersection point of the parallel to $(AG)$ through $O_B$ and the parallel to $(AC)$ through $O_D$. Let $P'$ be the symmetric of $P$ across $\Omega$, where $\Omega$ is the center of the parallelogram $ACIG$. Define
\[
\lambda:=\frac{PA^2P'A^2-PC^2P'C^2}{\Omega A^2-\Omega C^2}
\]
and
\[
k:=\frac{\Omega A^2PC^2P'C^2-\Omega C^2PA^2P'A^2}{\Omega A^2-\Omega C^2}
\]
Then we define $Q_S$ be to the following locus of points in $\R^2$ :
\begin{equation}
\label{eq:genericquartic}
Q_S:=\left\{M\in\R^2 | PM^2P'M^2-\lambda \Omega M^2 = k \right\}.
\end{equation}
The points $P$ and $P'$ are called the foci of the quartic $Q_S$ (see Figure~\ref{fig:quartic}). Taking coordinates centered at $\Omega$ and such that $P$ lies on the $x$-axis, we obtain for $Q_S$ an equation of the form~\eqref{eq:Miquelquartic}.

\subsection{Construction of $Q_S$ in the trapezoidal case}
\label{subsec:trapquartic}

Assume $S\in\calT_h$. Take coordinates centered at the center $\Omega$ of the parallelogram $ACIG$ (which is actually a rectangle in the trapezoidal case), with the $x$-axis parallel to $(AC)$. The points $C,D$ and $E$ have respective coordinates $(x_C,y_C),(x_D,y_E)$ and $(x_E,y_E)$. Define the quantities
\begin{align}
\alpha &= x_C^2+y_C^2+x_E^2+y_E^2+\frac{(x_D+x_C)^2(x_C^2+y_C^2-x_E^2-y_E^2)}{y_C^2-y_E^2} \label{eq:alpha} \\
\beta &= x_C^2+y_C^2+x_E^2+y_E^2+\frac{(x_D+x_C)^2(x_E^2-x_C^2)(x_C^2+y_C^2-x_E^2-y_E^2)}{(y_C^2-y_E^2)^2} \label{eq:beta} \\
\gamma &= (x_C^2+y_C^2)(x_E^2+y_E^2)+\frac{(x_D+x_C)^2(x_E^2y_C^2-x_C^2y_E^2)(x_C^2+y_C^2-x_E^2-y_E^2)}{(y_C^2-y_E^2)^2}
\end{align}
Then $Q_S$ is the curve of equation
\begin{equation}
\label{eq:nongenericquartic}
(x^2+y^2)^2-\alpha x^2-\beta y^2+\gamma=0.
\end{equation}
It would be interesting to have a coordinate-free geometric construction of $Q_S$ in the trapezoidal case, as we had in the generic case.

\section{Another construction of renormalized mutation}
\label{sec:tangentcircles}

The proof of statement~\eqref{eq:whitemutation} in Theorem~\ref{thm:translation} relies on a direct construction of $E_w'$ from the points $A,E,I$ and the quartic curve $Q_S$. Denote by $O_A$ (resp. $O_I$) the center of the circle $\calC_A$ (resp. $\calC_I$) going through $A$ (resp. $I$) and $E$ and tangent to $Q_S$ at $A$ (resp. $I$). Note that the points $O_A$ and $O_I$ must be distinct, hence the line $(O_AO_I)$ is well-defined. Otherwise the circle $\calC_A=\calC_I$ would intersect the quartic curve $Q_S$ in five real points counted with multiplicity, which would contradict B\'ezout's theorem, since there are already intersection points with the circular points at infinity which both count twice. We have the following simple construction for $E_w'$ :

\begin{proposition}
\label{prop:tangentcircles}
The point $E_w'$ is obtained by reflecting $E$ through the line $(O_AO_I)$. In particular, the circle going through $A$, $E$ and $E_w'$ is tangent to $Q_S$ at $A$.
\end{proposition}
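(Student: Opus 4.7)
The plan is to establish a pair of tangency claims: the unique circle through $A$, $E$, and $E_w'$ is tangent to $Q_S$ at $A$, and analogously the unique circle through $I$, $E$, and $E_w'$ is tangent to $Q_S$ at $I$. Granting both, these two circles must coincide with $\calC_A$ and $\calC_I$ respectively, by uniqueness of the circle through two given points with a prescribed tangent at one of them. Their centers $O_A$ and $O_I$ therefore both lie on the perpendicular bisector of $[EE_w']$, so this perpendicular bisector is the line $(O_AO_I)$, and reflecting $E$ across it gives $E_w'$. The ``in particular'' clause then follows immediately, since the circle through $A$, $E$, $E_w'$ is $\calC_A$ by construction.

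To prove the tangency at $A$, first unpack the definition of $E_w'$. The reflection rules defining $\mu_w$ identify $E_w$ as the second intersection of the circles $C_{0,0}$ (through $A, B, D, E$) and $C_{1,1}$ (through $E, F, H, I$), both of which contain $E$, and $A_w$ as the second intersection of $C_{0,0}$ with $C_{-1,-1}$, where by $(2,2)$-biperiodicity $C_{-1,-1}$ is the translate of $C_{1,1}$ by $\overrightarrow{IA}$. Then $E_w' = E_w + \overrightarrow{A_wA}$. We parametrize $S$ via the five-pointed equilateral hyperbola $\calH$ of Proposition~\ref{prop:hyperbola} and place ourselves in the coordinate system of Subsection~\ref{subsec:genericquartic}: $\Omega$ at the origin and $Q_S$ in the normal form $(x^2+y^2)^2 + ax^2 + by^2 + c = 0$. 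In these coordinates, $A$, $E$, $A_w$, $E_w$, and $E_w'$, together with the coefficients $a, b, c$, all become explicit polynomial functions of the hyperbola parameters.

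The tangency claim then reduces to a concrete algebraic identity: the tangent at $A$ to the circle through $A, E, E_w'$ (perpendicular to the segment from $A$ to the circumcenter) equals the tangent to $Q_S$ at $A$ (perpendicular to the gradient of the defining polynomial). Verifying this identity is a direct, if lengthy, computation using the explicit form of $Q_S$ recalled in Subsection~\ref{subsec:genericquartic}; the main obstacle is the bookkeeping rather than a conceptual difficulty, and it is what requires the whole parametrization to be set up cleanly beforehand. Tangency at $I$ is obtained by repeating the same argument with $I$ in place of $A$ and the circle $C_{2,2}$ (the translate of $C_{0,0}$ by $\overrightarrow{AI}$) in place of $C_{-1,-1}$. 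The trapezoidal case is handled analogously via the explicit form of $Q_S$ in Subsection~\ref{subsec:trapquartic} and the identities~\eqref{eq:alpha}--\eqref{eq:nongenericquartic}.
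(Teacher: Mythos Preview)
Your proposal is correct and takes essentially the same approach as the paper: both reduce to a brute-force computational verification in the five-pointed equilateral hyperbola parametrization (with $xy=1$ in the generic case), handling the trapezoidal case analogously. The only difference is organizational: the paper computes $O_A$ and $O_I$ directly as the intersections of the normals to $Q_S$ at $A$ and $I$ with the perpendicular bisectors of $[AE]$ and $[IE]$, and then checks that reflecting $E$ through $(O_AO_I)$ yields $E_w'$; you instead verify the two tangency claims separately and deduce the reflection via the perpendicular-bisector argument. Both amount to checking equivalent polynomial identities in the hyperbola parameters, and neither avoids the computer-algebra step.
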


\begin{proof}

We distinguish two cases, whether $S$ is generic or trapezoidal. All the computations mentioned below are easily performed on a computer algebra software, but we do not display in this paper all the formulas obtained, since some of them would take up to ten lines.

\emph{Generic case.} Assume $S\in\calG$. Up to applying a similarity, one may assume that the equilateral hyperbola going through $B,D,E,F$ and $H$ has equation $xy=1$. Denote respectively by $b,d,e,f$ and $h$ the abscissas of $B,D,E,F$ and $H$. We will successively compute several quantities in terms of $b,d,e,f$ and $h$. We first compute the coordinates of $O_1$, $O_2$ and $O_4$, which are the respective circumcenters of the triangles $BDE$, $DEH$ and $EFH$. The points $A$ and $B$ are the two intersection points of the following two circles :
\begin{itemize}
 \item the circle centered at $O_1$ going through $B$ ;
 \item the image under the translation of vector $\overrightarrow{HB}$ of the circle centered at $O_2$ going through $H$.
\end{itemize}
The statement about the second circle follows from the fact that $S\in\calS_{2,2}$ has vertical monodromy equal to $\overrightarrow{BH}$. Denoting by $O_2'$ the image of $O_2$ under the translation of vector $\overrightarrow{HB}$, we obtain $A$ as the reflection of $B$ through the line $(O_1O_2')$ :
\[
A=(b+d+e,b^{-1}+d^{-1}+e^{-1})
\]
Applying translations of respective vectors $\overrightarrow{DF}$, $\overrightarrow{BH}$ and  $\overrightarrow{DF}+\overrightarrow{BH}$, we obtain the coordinates of the points $C,G$ and $I$ :
\begin{align*}
C&=(b+e+f,b^{-1}+e^{-1}+f^{-1}) \\
G&=(d+e+h,d^{-1}+e^{-1}+h^{-1}) \\
I&=(e+f+h,e^{-1}+f^{-1}+h^{-1}).
\end{align*}
Next, we compute the coordinates of $E_w$ (the reflection of $E$ through the line $(O_1O_4)$), $A_w$ (the reflection of $A$ through the line going through $O_1$ and $O_4+\overrightarrow{IA}$) and finally $E'_w=E_w+\overrightarrow{A_wA}$. Then we compute $\Omega$ (midpoint of $[AI]$) and the foci $P$ and $P'$ of the quartic curve $Q_S$ (which requires to first compute the respective circumcenters $O_B$ and $O_D$ of the triangles $ABC$ and $ADG$, as explained in Subsection~\ref{subsec:genericquartic}): 
\begin{align*}
\Omega&=(\frac{b+d+2e+f+h}{2},\frac{b^{-1}+d^{-1}+2e^{-1}+f^{-1}+h^{-1}}{2}) \\
P&=(\frac{b+d+e+f+h-bdefh}{2},\frac{b^{-1}+d^{-1}+e^{-1}+f^{-1}+h^{-1}-(bdefh)^{-1}}{2}) \\
P'&=(\frac{b+d+3e+f+h+bdefh}{2},\frac{b^{-1}+d^{-1}+3e^{-1}+f^{-1}+h^{-1}+(bdefh)^{-1}}{2})
\end{align*}
We also compute the real number
\[
\lambda=\frac{PA^2P'A^2-PC^2P'C^2}{\Omega A^2-\Omega C^2}.
\]
The quartic $Q_S$ then has an equation of the form
\[
Q_S=\left\{M\in\R^2 | PM^2P'M^2-\lambda \Omega M^2 = k \right\}
\]
for some real number $k$. Next, we compute the intersection point $O_A$ (resp. $O_I$) of the normal to the quartic at $A$ (resp. $I$) and the perpendicular bisector of the segment $[AE]$ (resp. $[IE]$). We finally check that $E'_w$ is indeed the reflection of $E$ through the line $(O_AO_I)$.

\emph{Trapezoidal case.} Assume $S\in\calT_h$. Up to applying a similarity, we may assume that the degenerate equilateral hyperbola going through $B,D,E,F$ and $H$ has equation $xy=0$ with $D,E$ and $F$ lying on the line $y=0$ and $B$ and $H$ lying on the line $x=0$. As in the generic case, we successively compute as functions of the coordinates of $B,D,E,F$ and $H$ the following quantities :
\begin{enumerate}
 \item the coordinates of $A,C,I$ and $E_w'$ ;
 \item the quantities $\alpha$ and $\beta$ using formulas~\eqref{eq:alpha} and~\eqref{eq:beta} ;
 \item the coordinates of $O_A$ and $O_I$.
\end{enumerate}
We finally check that $E'_w$ is indeed the reflection of $E$ through the line $(O_AO_I)$.

\end{proof}

\section{The group law on non-degenerate binodal quartics}
\label{sec:grouplaw}

We first prove Theorem~\ref{thm:quarticaddition} about the group law on general non-degenerate binodal quartics in Subsection~\ref{subsec:generalcase}, before applying it to prove Theorem~\ref{thm:translation} about Miquel dynamics in Subsection~\ref{subsec:Miquelcase}. The proof of Corollary~\ref{cor:invariantmeasure} is in Subsection~\ref{subsec:invariant}.

\subsection{General non-degenerate binodal quartics}
\label{subsec:generalcase}

Recall that two divisors on a Riemann surface are said to be linearly equivalent if their difference is the divisor of a meromorphic function. A \emph{complete linear system} is defined to be any set of all the divisors linearly equivalent to a given divisor. We will deal with divisors on the elliptic normalization $\hat\Ga$ of a non-degenerate binodal quartic $\Ga$.

\begin{lemma}
\label{lem:completesystem}
Let $\Ga\subset\cp^2$ be a non-degenerate binodal quartic curve with nodes $T_1$ and $T_2$. Consider the three-dimensional family $\calC$ of all the conics (including degenerations to unions of lines) passing through the nodes $T_1$ and $T_2$. A conic $c\in \calC$ intersects $\Ga$ at four additional points $X_c,Y_c,Z_c$ and $W_c$, which need not be distinct and may coincide with $T_1$ or $T_2$, and we denote by $\mcd_c$ the divisor $[X_c]+[Y_c]+[Z_c]+[W_c]$ on $\hat\Ga$. The set $\{\mcd_c\}_{c\in\calC}$ forms a complete linear system.
\end{lemma}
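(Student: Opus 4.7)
The plan is to identify the family $\{\mcd_c\}_{c\in\calC}$ with a complete linear system on $\hat\Ga$ by exhibiting a linear isomorphism between the $4$-dimensional vector space of conics through $T_1$ and $T_2$ and the space of sections of an appropriate line bundle on $\hat\Ga$, then invoking Riemann-Roch on the elliptic normalization to match dimensions.

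First I would set up the line-bundle picture. Write $\pi:\hat\Ga\to\Ga$ for the normalization, denote the two preimages of each node $T_i$ by $T_i^{(1)}$ and $T_i^{(2)}$, and set $N := [T_1^{(1)}] + [T_1^{(2)}] + [T_2^{(1)}] + [T_2^{(2)}]$, an effective divisor of degree $4$ on $\hat\Ga$. Let $L := \pi^*\mathcal O_{\cp^2}(2)|_\Ga$, which by B\'ezout has degree $2\cdot\deg\Ga = 8$. For any conic $c\in\calC$, the pullback $\pi^* c$ is a global section of $L$ that vanishes at all four points of $N$ (since $c$ vanishes at $T_1,T_2$ and $\pi$ sends both local branches at each node to that node), so it descends canonically to a global section $s_c$ of $L(-N)$ whose zero divisor on $\hat\Ga$ is exactly $\mcd_c$.

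Next I would check that the resulting linear map $r : V \to H^0(\hat\Ga, L(-N))$, where $V$ denotes the vector space of conics through $T_1$ and $T_2$, is an isomorphism. For injectivity, if $r(c) = 0$ then $c$ vanishes identically on $\Ga$; but $\deg \Ga = 4 > 2 = \deg c$, so B\'ezout forces $c\equiv 0$. For dimensions: $\dim V = 4$, since two distinct points impose independent linear conditions on the $6$-dimensional space of conics (a double line through $T_1$ avoiding $T_2$ separates them), while $\deg L(-N) = 8 - 4 = 4 > 2g - 2 = 0$, so Riemann-Roch on the elliptic curve $\hat\Ga$ yields $h^0(\hat\Ga, L(-N)) = \deg L(-N) = 4$.

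Therefore $r$ is a linear isomorphism, and projectivizing gives a bijection $\calC = \mathbb P(V) \to |L(-N)|$, so the family $\{\mcd_c\}_{c\in\calC}$ is exactly the complete linear system $|L(-N)|$, which is the desired conclusion. The key step is the Riemann-Roch dimension count, which converts the a priori injection $r$ into an isomorphism. A smaller bookkeeping point is the passage from $H^0(L)$ to $H^0(L(-N))$: one needs that vanishing of $c$ at a node $T_i$ forces $\pi^* c$ to vanish at \emph{both} preimages $T_i^{(1)}, T_i^{(2)}$, which follows immediately from the normalization separating the two local branches at $T_i$, and removes any need to track tangencies of the conic to the branches at the nodes.
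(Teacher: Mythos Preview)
Your proof is correct. The paper proves this lemma by direct appeal to the Brill--Noether residue theorem for adjoint curves (as in Van der Waerden, \emph{Einf\"uhrung in die algebraische Geometrie}, \S49): for a plane curve with only nodal singularities, the divisors cut out away from the nodes by adjoint curves of a fixed degree form a complete linear system, and the lemma is the special case of two nodes and adjoints of degree $2$.

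Your route is genuinely different in that it is self-contained rather than citing a black-box classical theorem: you identify the family of divisors with $|L(-N)|$ on $\hat\Ga$ and close the argument with a Riemann--Roch dimension count on the elliptic normalization. In effect you are reproving the relevant special case of Brill--Noether directly. What the paper's approach buys is brevity and context (it situates the lemma as an instance of a standard result); what your approach buys is that the reader need not look anything up, and the mechanism (injectivity by B\'ezout plus $h^0(L(-N))=\deg L(-N)=4$ on an elliptic curve) is laid bare. One small remark: your injectivity step implicitly uses that $\Ga$ is irreducible, which follows here from the standing hypothesis that the normalization is an elliptic curve, but it would not hurt to say so explicitly.
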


\begin{proof}
This lemma is an immediate consequence of the theory of adjoint curves developed for example in~\cite[section 49]{V45}. For an algebraic curve $\calG$ with singularities that are all nodes, an adjoint curve $\calA$ to $\calG$ is defined to be any curve passing through all the nodes of $\calG$. Let $\calD_{\calA}$ be the divisor corresponding to all the intersections of $\calA$ with $\calG$ outside of the nodes. The Brill-Noether residue theorem~\cite[p.216]{V45} states that the set $\{\calD_{\calA}\}$, where $\calA$ ranges over all the adjoint curves to $\calG$ of a fixed degree $d$, forms a complete linear system. The lemma corresponds to the special case with two nodes and adjoint curves of degree $d=2$. 
\end{proof}

We are now ready to prove Theorem~\ref{thm:quarticaddition}.

\begin{proof}[Proof of Theorem~\ref{thm:quarticaddition}]
For any conic $c\in\calC_P$, denote by $\calD_{c,P}$ the divisor $\calD_c-[P]=[X_c]+[Y_c]+[Z_c]$. The linear equivalence of two effective divisors on $\hat\Ga$ containing the base point $P$ is equivalent to the linear equivalence of their differences with the single-point divisor $[P]$, since linear equivalence classes of divisors form an additive group. Together with Lemma~\ref{lem:completesystem}, this implies that the set $\{\calD_{c,P}\}_{c\in\calC_P}$ forms a complete linear system.

For every two divisors $\sum_{j=1}^d[S_j]$ and $\sum_{j=1}^d[T_j]$ of the same degree $d$ on an elliptic curve the equality
\begin{equation}
\label{eq:pointssum}
\sum_{j=1}^d S_j=\sum_{j=1}^d T_j
\end{equation}
for the addition in the group law is independent on the choice of neutral element: if it holds for the group law defined by one neutral element, then it holds for every other neutral element. Equality~\eqref{eq:pointssum} holds if and only if the corresponding divisors are linearly equivalent. This is  a particular case of Abel's Theorem \cite[chapter 2, section 2]{GH78}.

Since $\{\calD_{c,P}\}_{c\in\calC_P}$ forms a complete linear system, for any $(c,c')\in(\calC_P)^2$, $[X_c]+[Y_c]+[Z_c]$ is linearly equivalent to $[X_{c'}]+[Y_{c'}]+[Z_{c'}]$, thus $X_c+Y_c+Z_c$ equals $X_{c'}+Y_{c'}+Z_{c'}$ for the group law on the elliptic normalization of $\Ga$, regardless of the choice of neutral element. By surjectivity of the tripling map, there exists $N$ such that $3N$ equals the quantity $X_c+Y_c+Z_c$, which is independent of $c\in\calC_P$. Taking this point $N$ as the neutral element, we obtain a group law on $\hat\Ga$ such that for any $c\in\calC_P$, $X_c+Y_c+Z_c=0$.
\end{proof}

\subsection{The group law on non-degenerate Miquel quartics}
\label{subsec:Miquelcase}

We first identify when a Miquel quartic is non-degenerate.

\begin{lemma}
\label{lem:quarticdegeneration}
A Miquel quartic $Q$ with an equation given by~\eqref{eq:Miquelquartic} is a non-degenerate binodal quartic curve if and only if $a\neq b$ and $4c \notin \left\{0,a^2, b^2\right\}$. In that case, its nodes are the two circular points at infinity.
\end{lemma}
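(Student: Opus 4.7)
The plan is to analyze all singularities of the projective closure $\wt Q\subset\cp^2$, given in homogeneous coordinates by
\[
F(x,y,z):=(x^2+y^2)^2+ax^2z^2+by^2z^2+cz^4=0,
\]
and then characterize when each such singularity is an ordinary node.

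First I would handle the points at infinity. Setting $z=0$ yields $(x^2+y^2)^2=0$, whose only projective solutions are the circular points $T_{\pm}=(1:\pm i:0)$. A direct computation of $F_x,F_y,F_z$ at $T_\pm$ shows that all three partials vanish, so $T_+$ and $T_-$ are always singular on $\wt Q$. To identify the type of singularity, I would work in the affine chart $x=1$ near $T_+$, setting $y=i+u$ and $z=v$, and expand $F(1,i+u,v)$. The lowest order terms form the quadratic
\[
-4u^2+(a-b)v^2,
\]
which has rank $2$ (equivalently, factors as a product of two distinct linear forms) if and only if $a\ne b$. Thus $T_+$ is an ordinary node exactly when $a\ne b$, and an analogous computation at $T_-$ gives the same criterion. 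When $a=b$ the lowest-order part is a nonzero multiple of $u^2$, so $T_\pm$ are worse-than-nodal singularities.

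Next I would look for affine singularities. Writing $F_{\mathrm{aff}}=(x^2+y^2)^2+ax^2+by^2+c$, the partial derivatives factor as
\[
\partial_x F_{\mathrm{aff}}=2x\bigl(2(x^2+y^2)+a\bigr),\qquad
\partial_y F_{\mathrm{aff}}=2y\bigl(2(x^2+y^2)+b\bigr),
\]
so a singular point must fall into one of the four cases: (i) $x=y=0$; (ii) $x=0$, $2y^2+b=0$; (iii) $y=0$, $2x^2+a=0$; (iv) $2(x^2+y^2)+a=0=2(x^2+y^2)+b$. Substituting into $F_{\mathrm{aff}}=0$ gives the respective conditions $c=0$, $4c=b^2$, $4c=a^2$, and (for case (iv)) $a=b$ together with $4c=a^2$. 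Hence no affine singularity exists precisely when $4c\notin\{0,a^2,b^2\}$ and (for (iv)) already covered by $a\ne b$.

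Combining the two analyses, $\wt Q$ has no singularities other than the two circular-point nodes if and only if $a\ne b$ and $4c\notin\{0,a^2,b^2\}$, which proves the lemma. The only mildly delicate step is step 1, checking that the quadratic form at $T_\pm$ has rank $2$ rather than being a square; everything else reduces to elementary case analysis on the zeros of $F_x,F_y$ and inspection of $F$.
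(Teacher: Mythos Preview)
Your proof is correct and follows essentially the same approach as the paper: homogenize, locate all singular points via the vanishing of the partials, check that the circular points are always singular and are nodes iff $a\ne b$, and then do the four-case analysis on the affine partials to rule out further singularities. The only cosmetic difference is that the paper checks the node condition at $T_\pm$ by computing the rank of the $3\times3$ Hessian of the homogeneous form, whereas you pass to the affine chart $x=1$ and read off the rank of the quadratic part $-4u^2+(a-b)v^2$; these are equivalent computations.
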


\begin{proof}
The equation of $Q$ in homogeneous coordinates is
\begin{equation}
\label{eq:homogeneous}
(x^2+y^2)^2+ax^2z^2+by^2z^2+cz^4=0.
\end{equation}
A singular point of the quartic of homogeneous coordinates $(x:y:z)$ must satisfy~\eqref{eq:homogeneous} as well as the following three equations, corresponding to the vanishing of the three partial derivatives with respect to $x$, $y$ and $z$ of the left-hand side of~\eqref{eq:homogeneous}:
\begin{align}
x(2x^2+2y^2+az^2)&=0 \label{eq:partialx} \\
y(2x^2+2y^2+bz^2)&=0 \label{eq:partialy} \\
z(ax^2+by^2+2cz^2)&=0 \label{eq:partialz}
\end{align}
One checks that the two circulars points at infinity $T_1$ and $T_2$ of respective coordinates $(1:i:0)$ and $(1:-i:0)$ satisfy equations~\eqref{eq:homogeneous} to~\eqref{eq:partialz} for any choice of $a,b,c$. Furthermore, the Hessian of the left-hand side of~\eqref{eq:homogeneous} at $T_1$ is given by
\[
\left(
\begin{array}{ccc}
 8 & 8 i & 0 \\
 8 i & -8 & 0 \\
 0 & 0 & 2 (a-b) \\
\end{array}
\right),
\]
which has rank $2$ if and only if $a \neq b$. Hence $T_1$ is a node if and only if $a \neq b$. The same holds for $T_2$.

Equation~\eqref{eq:homogeneous} has no solution on the line at infinity $z=0$ besides $T_1$ and $T_2$. Distinguishing when $x$ or $y$ vanish, it is easy to check that for $a \neq b$, the only other singularities are
\begin{itemize}
 \item the point $(0:0:1)$ when $c=0$ ;
 \item the points $(\pm \sqrt{-a/2} :0 : 1)$ when $c=a^2/4$ ;
 \item the points $(0:\pm \sqrt{-b/2} : 1)$ when $c=b^2/4$.
\end{itemize}
This concludes the proof.
\end{proof}

We now use Theorem~\ref{thm:quarticaddition} to provide a geometric construction of the group law on a non-degenerate Miquel quartic.

\begin{proof}[Proof of Proposition~\ref{prop:miquelgrouplaw}]
Here $N$ is an intersection point of the non-degenerate Miquel quartic $Q_S$ with the $x$-axis. Since the nodes of $Q_S$ are the circular points at infinity and the (complex) circles are exactly the conics going through both circular points, the set $\calC_N$ consists in all the circles going through $N$. By symmetry with respect to the $x$-axis, the osculating circle to $Q_S$ at $N$ has an intersection of order $4$ with $Q_S$ at $N$. Thus $N$ can be taken as the neutral element for a group law on $Q_S$ with base point $N$ as constructed in Theorem~\ref{thm:quarticaddition}. Since both $Q_S$ and any tangent circle to $Q_S$ at $N$ are symmetric with respect to the $x$-axis, we deduce that the inverse for this group law is given by reflection through the $x$-axis. The statement about the sum of two points follows immediately.
\end{proof}

Theorem~\ref{thm:translation} is then an immediate consequence of Proposition~\ref{prop:tangentcircles}.

\begin{proof}[Proof of Theorem~\ref{thm:translation}]
By symmetry, it suffices to prove~\eqref{eq:whitemutation}. We consider two circles :
\begin{itemize}
 \item the osculating circle to $Q_S$ at $N$ ;
 \item the circle going through the points $A,E$ and $E_w'$, which is tangent to $Q_S$ at $A$ by Proposition~\ref{prop:tangentcircles}.
\end{itemize}
By Lemma~\ref{lem:completesystem}, the divisors $[E_w']+[E]+2[A]$ and $4[N]$ are linearly equivalent. For the group law on $Q_S$ with $N$ as base point and neutral element described in Proposition~\ref{prop:miquelgrouplaw}, we have $N=0$, hence $E_w'+E+2A=0$, by an argument similar to the one used in the proof of Theorem~\ref{thm:quarticaddition}.
\end{proof}

\subsection{The invariant measure on Miquel quartics}
\label{subsec:invariant}

\begin{proof}[Proof of Corollary~\ref{cor:invariantmeasure}]

Let $Q$ be a non-degenerate binodal Miquel quartic, with an equation of the form~\eqref{eq:Miquelquartic}. We show below that the pullback of the form $\omega$ to the elliptic normalization of the quartic $Q$ is a holomorphic differential. Firstly, $\omega$ is meromorphic.

Secondly, it has no poles at the intersection points of $Q$ with the coordinate axes. For example, consider a point $M\in\cc^2$ of its intersection with the $y$-axis. The germ at $M$ of the quartic is the graph of an even function $y=g(x)$, hence $g'(0)=0$ and $g(x)=g(0)+O(x^2)$ with $g(0)\neq0$, since $(0,0)\notin Q$ (which follows from the fact that $c\neq 0$ by Lemma~\ref{lem:quarticdegeneration}). Thus, in a neighborhood of the point $M$ one has $dy=O(x)dx$ and
\[
\omega=\frac{2dx}{y}+\frac{2dy}x=O(1)dx.
\]
The case of an intersection point with the $x$-axis is treated analogously.

Thirdly, we show below that the circular points at infinity are not poles of the restriction of the form $\omega$ to local branches of the quartic $Q$ at these points. There are two local branches of the quartic at each circular point, since each circular point is a node. Each of these local branches is regular and transverse to the infinity line, otherwise the intersection index of the quartic with the infinity line would be greater than $4$, which would contradict B\'ezout's theorem. Take an arbitrary local branch $\phi$ at a circular point, say $T_1$ with homogeneous coordinates $(1:i:0)$, and consider the restriction to it of the form $\omega$. The function $xy$ is meromorphic on $\cp^2$ with a pole of order $2$ along the infinity line and the local branch $\phi$ is transverse to it, thus the denominator $xy|_{\phi}$ has a pole of order $2$ at $T_1$. The primitive $(x^2+y^2)|_{\phi}$ of the numerator has at most first order pole at $T_1$, since the circular points satisfy the equality $x^2+y^2=0$. In more detail, let us introduce new affine coordinates $(u,v)$ on $\cc^2$ so that $x^2+y^2=uv$ and the $u$-axis intersects the infinity line at the point $T_1$. Define $\wt u=\frac1u$ and $\wt v=\frac vu$ and observe that they are local affine coordinates centered at $T_1$. The coordinate $\wt u$ can be taken as a local parameter of the branch $\phi$ since $\phi$ is transverse to the infinity line and $\wt u$ vanishes on the infinity line with order $1$. Furthermore, $\wt v$ is holomorphic and vanishes at $T_1$ thus $\wt v=O(\wt u)$ in a neighborhood of $T_1$ on $\phi$. Therefore, one has 
\[
x^2+y^2=uv=\frac{\wt v}{\wt u^2}=O(\frac1{\wt u})
\]
in a neighborhood of $T_1$ on $\phi$. Hence the restriction of the form $\omega$ to a local branch of $Q$ has a pole of order at most $2-2=0$ at a circular point. 

Thus the pullback of the form $\omega$ is a holomorphic differential on the normalization of $Q$. Recall that any holomorphic differential on an elliptic curve is invariant by any translation defined by the group structure on the curve. Its modulus induces a measure on the real part, which is invariant under every translation, hence under the translation induced on $E$ by the composition $\mu_b' \circ \mu_w'$. Note in passing that, since $\omega$ has no zeros, it induces a form of constant sign on each component of the real part of the quartic.
\end{proof}

\section*{Acknowledgements}
We thank Yuliy Baryshnikov for suggesting the study of the group law on Miquel quartics. We also thank Eugenii Shustin for pointing out the adjoint curve construction in~\cite{V45} and Ivan Izmestiev for drawing our attention to the connection between translation on binodal quartic curves and quadrilateral folding.

\label{Bibliography}
\bibliographystyle{plain}
\bibliography{bibliographie}

\end{document}